\documentclass[a4paper,12pt]{article}
%%%%%%%%%%%%%%%%%%%%%%%%%%%%%%%%%%%%%%%%%%%%%%%%%%%%%%%%%%%%%%%%%%%%%%%%%%%%%%%%%%%%%%%%%%%%%%%%%%%%%%%%%%%%%%%%%%%%%%%%%%%%%%%%%%%%%%%%%%%%%%%%%%%%%%%%%%%%%%%%%%%%%%%%%%%%%%%%%%%%%%%%%%%%%%%%%%%%%%%%%%%%%%%%%%%%%%%%%%%%%%%%%%%%%%%%%%%%%%%%%%%%%%%%%%%%
\usepackage{amsfonts,amsmath,amssymb,amsthm,verbatim,placeins,color,comment,placeins,caption,subcaption,enumerate,bm}
\usepackage{graphicx}
\usepackage{geometry}
\geometry{width=16cm,height=22.5cm,centering}
\usepackage[pdfauthor={DG},pdfstartview={FitH},colorlinks=true,citecolor=blue]{hyperref}
\usepackage[english]{babel}
\usepackage[sort&compress]{natbib}

\newtheorem{theorem}{Theorem}[section]

\newtheorem{lemma}{Lemma}[section]

\theoremstyle{remark}

\theoremstyle{definition}
\newtheorem{defi}{Definition}[section]
\newtheorem{example}{Example}[section]

\theoremstyle{remark}
\newtheoremstyle{myremark}{}{}{\color{blue}\small}{}{\color{blue}\bfseries}{}{ }{}

\theoremstyle{myremark}

\newcommand{\E}{\mathbb{E}} %for expectation
 %for probability
 %for variance
 %for covariance
\newcommand{\corr}{\operatorname{Corr}} %for correlation
 %for real part
\newcommand{\R}{{\mathbb R}}
\newcommand{\N}{{\mathbb N}}
\newcommand{\Z}{{\mathbb Z}}

\makeatletter
\renewcommand*{\@fnsymbol}[1]{\ensuremath{\ifcase#1\or *\or \mathparagraph\or \ddagger\or
        \mathsection\or \mathparagraph\or \|\or **\or \dagger\dagger
        \or \ddagger\ddagger \else\@ctrerr\fi}}
\makeatother

\begin{document}

\renewcommand*{\thefootnote}{\fnsymbol{footnote}}

\begin{center}
\Large{\textbf{Intermittency of trawl processes}}\\
\bigskip
\large{\today}\\
\bigskip
Danijel Grahovac$^1$\footnote{dgrahova@mathos.hr}, Nikolai N.~Leonenko$^2$\footnote{LeonenkoN@cardiff.ac.uk}, Murad S.~Taqqu$^3$\footnote{murad@bu.edu}\\
\end{center}

\bigskip
\begin{flushleft}
\footnotesize{
$^1$ Department of Mathematics, University of Osijek, Trg Ljudevita Gaja 6, 31000 Osijek, Croatia\\
$^2$ School of Mathematics, Cardiff University, Senghennydd Road, Cardiff, Wales, UK, CF24 4AG}\\
$^3$ Department of Mathematics and Statistics, Boston University, Boston, MA 02215, USA
\end{flushleft}

\bigskip

\textbf{Abstract: }
We study the limiting behavior of continuous time trawl processes which are defined using an infinitely divisible random measure of a time dependent set. In this way one is able to define separately the marginal distribution and the dependence structure. One can have long-range dependence or short-range dependence by choosing the time set accordingly. We introduce the scaling function of the integrated process and show that its behavior displays intermittency, a phenomenon associated with an unusual behavior of moments.

\bigskip

\section{Introduction}

Trawl processes form a class of stationary infinitely divisible processes that allow the marginal distribution and dependence structure to be modelled independently from each other (see \cite{barndorff2011stationary}, \cite{barndorff2014integer} and \cite{barndorff2015recent}). They are defined by
\begin{equation}\label{cttrawlp}
    X(t) = \Lambda (A_t), \quad t \in \R,
\end{equation}
where $\Lambda$ is a homogeneous infinitely divisible independently scattered random measure (\emph{L\'evy basis}) and $A_t = A + (0, t)$ for some Borel subset $A$ of $\R\times \R$ of finite Lebesgue measure. The set $A$ is called the \emph{trawl} and is usually specified using the \emph{trawl function} $g : [0,\infty) \to [0,\infty)$ as
\begin{equation*}
  A = \left\{ (\xi, s) : 0 \leq \xi \leq g(-s), \ s \leq 0 \right\},
\end{equation*}
so that
\begin{equation*}
  A_t = \left\{ (\xi, s) : 0 \leq \xi \leq g(t-s), \ s \leq t \right\}.
\end{equation*}
As explained in \cite{barndorff2015recent} the trawl $A$ can be regarded as a fishing net dragged along the sea, so that at time $t$ it is in position $A_t$. A similar structure can be found in \cite{wolpert2005fractional}. To any L\'evy basis $\Lambda$ there corresponds a L\'evy process $L=\{L(t), \, t \geq 0\}$ referred to as the \emph{L\'evy seed}. The choice of the L\'evy seed determines the marginal law of the trawl process, while the shape of the trawl set $A$ controls the dependence structure. In particular, taking the trawl function to be $-(\alpha+1)$-regularly varying at infinity for some $\alpha \in (0,1)$, one obtains long-range dependence of the resulting trawl process. See Section \ref{sec2} for details.

A discrete time analog of the trawl process \eqref{cttrawlp} has been defined in \cite{doukhan2016discrete} as a process
\begin{equation}\label{discretetrawl}
  Y(k) = \sum_{j=0}^\infty Z^{(k-j)} (a_j), \quad k \in \Z,
\end{equation}
where $Z^{(k)}=\{Z^{(k)}(u), \, u \in \R\}$, $k\in \Z$ are i.i.d.~copies of some process $Z=\{Z(u), \, u \in \R\}$ stochastically continuous at zero and $(a_j)_{j\in\N}$ is a sequence of constants such that $a_j \to 0$ as $j \to \infty$. The long-range dependent case in the discrete time setting corresponds to choosing a sequence $a_j=L(j) j^{-\alpha-1}$ where $L$ is some slowly varying function.

The correspondence of $Y(k)$ in \eqref{discretetrawl} with the continuous time trawl process \eqref{cttrawlp} is the following. Suppose on one hand that $\{Y_k, \, k\in \Z\}$ is a discrete time trawl process with trawl sequence $(a_j)_{j\in\N_0}$ and such that $Z$ is some two-sided L\'evy process $L=\{L(t), \, t \in \R\}$. On the other hand, let $\{X(t), \, t \in \R\}$ be a trawl process with L\'evy seed process $L$ and trawl specified by the function
\begin{equation*}
  g(x) = \sum_{j=0}^\infty a_j \bm{1}_{(-j-1, -j]}(x).
\end{equation*}
Then $\{Y(k), \, k\in \Z\}$ is equal in law to a discretized process $\{X(k), \, k \in \Z\}$ (\cite{doukhan2016discrete}). While the marginal distribution of the trawl process $X(t)$ in \eqref{cttrawlp} is necessarily infinitely divisible, the discrete time setting allows for rather general seed processes.

An important and interesting question regarding trawl processes are limit theorems for cumulative processes arising from them. Assuming the trawl process has zero mean, in the discrete time setup, the cumulative process would be a partial sum process $S_n(t)=\sum_{k=1}^{[nt]} Y(k)$ while in the continuous time it is natural to consider the integrated process $X^*(t) = \int_0^t X(u) du$. However, as we show in this paper, the corresponding limiting behavior of moments seems to be unexpected.

\cite{doukhan2016discrete} have interesting results. In their paper, a limit theorem is proved with convergence to fractional Brownian motion for the partial sum process formed from a zero mean long-range dependent discrete time trawl process \cite[Theorem 1.(i)]{doukhan2016discrete}. The crucial condition for this result is the following small time moment asymptotics of the seed process: for some $\delta>0$, one has
\begin{equation}\label{e:doukhcon}
	\E |Z(t)|^{2+\delta} = O(|t|^{\frac{2+\delta}{2}}), \quad \text{ as } t \to 0.
\end{equation}
One may wonder whether the proof of \cite[Theorem 1.(i)]{doukhan2016discrete} could be extended to the continuous time trawl processes. The following argument shows that the condition \eqref{e:doukhcon} excludes the possibility that the seed process is any L\'evy process except Brownian motion. Indeed, suppose $Z$ is a L\'evy process with L\'evy measure $\nu$ such that $\E Z(1)=0$. By \cite[Lemma 3.1]{asmussen2001approximations} for any $\delta\geq 0$ such that $\E |Z(1)|^{2+\delta} < \infty$, one has
\begin{equation*}
  \lim_{n \to \infty} n \E |Z(1/n)|^{2+\delta} = \int_{\R} |x|^{2+\delta} \nu(dx).
\end{equation*}
Hence, $\E |Z(t)|^{2+\delta} \sim C_\delta  t$ as $t\to 0$ for any $\delta>0$ and \eqref{e:doukhcon} cannot hold unless $\nu=0$ and $Z$ is a Brownian motion. Since Brownian motion is self-similar with self-similarity parameter $1/2$, condition \eqref{e:doukhcon} holds for Brownian motion but not for any other L\'evy process. Hence, the conditions of \cite[Theorem 1(i)]{doukhan2016discrete} cannot be adapted to obtain a limit theorem for a continuous time trawl process \eqref{cttrawlp} when generated by a non-Gaussian seed process.

Our focus in this paper is on the convergence of moments. We prove that the integrated long-range dependent non-Gaussian trawl processes satisfying certain regularity assumptions on the trawl, have a specific limiting behavior called \emph{intermittency}. A precise definition is given in Section \ref{sec3}. Such a property has so far been established for a partial sum and integrated process of superpositions of Ornstein-Uhlenbeck type processes (see \cite{GLST2015} and \cite{GLST2016}). This result sheds a new light on the limiting behaviour related to trawl processes.

\section{Trawl processes}\label{sec2}

In this section we define trawl processes following \cite{barndorff2011stationary}, \cite{barndorff2014integer} and \cite{barndorff2015recent}.

\subsection{Preliminaries}
Let
\begin{equation*}
\kappa_Y(\zeta)=C\left\{ \zeta \ddagger Y\right\} = \log \E e^{i \zeta Y}
\end{equation*}
denote the cumulant (generating) function of a random variable $Y$ and, assuming it exists, $\kappa_Y^{(m)}$ for $m \in \N$ will denote the $m$-th cumulant of $Y$, that is
\begin{equation*}
  \kappa_Y^{(m)} = (-i)^m \frac{d^m}{d\zeta^m} \kappa_Y(\zeta) \big|_{\zeta=0}.
\end{equation*}
If $\kappa_Y(\cdot)$ is analytic around the origin, then
\begin{equation}\label{e:cgf}
\kappa_Y(\zeta)=\sum _{m=1}^\infty \frac {(i\zeta )^m}{m!}\kappa_Y^{(m)}.
\end{equation}
For a stochastic process $Y=\{Y(t)\}$ we write $\kappa_Y(\zeta,t) = \kappa_{Y(t)}(\zeta)$, and by suppressing $t$ we mean $\kappa_Y(\zeta)=\kappa_Y(\zeta,1)$, that is the cumulant function of the random variable $Y(1)$. Similarly, for the cumulants of $Y(t)$, we use the notation $\kappa_Y^{(m)}(t)$, and $\kappa_Y^{(m)}$ for $\kappa_Y^{(m)}(1)$. Recall that the cumulant function of infinitely divisible random variable $Y$ has the L\'{e}vy-Khintchine representation
\begin{equation}\label{e:kappaL}
\kappa(\zeta) = C\left\{ \zeta \ddagger Y\right\} =ia\zeta -\frac{b}{2}\zeta^{2}+\int_{\R}\left( e^{i\zeta x}-1-i\zeta \mathbf{1}_{[-1,1]}(x)\right) \nu(dx), \quad \zeta \in \R
\end{equation}
where $a\in \R$, $b>0$, and the \textit{L\'{e}vy measure} $\nu$ is a deterministic Radon measure on $\R\backslash \{0\}$ such that $\nu\left( \left\{ 0\right\} \right) =0$ and $\int_{\R} \min \left\{ 1,x^{2}\right\} \nu(dx)<\infty$. The triplet $(a,b,\nu)$ is referred to as the \textit{characteristic triplet}. A stochastic process $\{L(t), \, t\geq 0\}$ with stationary, independent increments and continuous in probability ($L(t) \to^P 0$ as $t\to 0$) has a c\`adl\`ag modification which we refer to as a \textit{L\'evy process}. For any infinitely divisible random variable $Y$, there is a corresponding L\'evy process $\{L(t), \, t \geq 0\}$ such that $Y =^d L(1)$.

Next, we review some basic facts about (homogeneous) \textit{L\'evy bases} on $\R^d$, $d \in \N$. A L\'evy basis on $\R^d$ is an infinitely divisible independently scattered random measure, that is, a collection of random variables $\Lambda=\left\{ \Lambda(A), \, A\in \mathcal{B}_b(\R^d) \right\}$ where $\mathcal{B}_b(\R^d)$ denotes the family Borel subsets of $\R^d$ with finite Lebesgue measure. That $\Lambda$ is independently scattered random measure means that for every sequence $\left\{ A_{n}\right\} $ of disjoint sets in $\mathcal{B}_b(\R^d)$, the random variables $\Lambda(A_{n})$, $n=1,2,...$ are independent and
\begin{equation*}
\Lambda\left( \bigcup\limits_{n=1}^{\infty }A_{n}\right) =\sum_{n=1}^{\infty} \Lambda(A_{n}) \quad a.s.
\end{equation*}%
whenever $\bigcup_{n=1}^{\infty}A_{n}\in \mathcal{B}_b(\R^d)$. Moreover, $\Lambda$ is infinitely divisible in the sense that for any collection of sets $A_1, \dots, A_n \in \mathcal{B}_b(\R^d)$ the random vector $\left(\Lambda(A_1), \dots, \Lambda(A_n) \right)$ is infinitely divisible. We will be dealing only with \textit{homogeneous} L\'evy bases which have the property that for every $A \in \mathcal{B}_b(\R^d)$ the cumulant function of $\Lambda(A)$ is given by
\begin{equation*}
  C \left\{ \zeta \ddagger \Lambda(A) \right\} = Leb (A) \kappa (\zeta)
\end{equation*}
where $Leb$ denotes the Lebesgue measure and $\kappa$ is the cumulant function of some infinitely divisible law having the L\'{e}vy-Khintchine representation \eqref{e:kappaL} with $a\in \R$, $b>0$, and L\'evy measure $\nu$. A L\'evy process $\{L(t), \, t \geq 0\}$ such that $C \left\{ \zeta \ddagger L(1) \right\} = \kappa_L (\zeta) = \kappa(\zeta)$ is called the \textit{L\'evy seed} of $\Lambda$. In the more general context, $(a, b, \nu, Leb)$ is referred to as the \textit{characteristic quadruple} and $Leb$ as the \textit{control measure}. Note that to any infinitely divisible distribution there corresponds a homogeneous L\'evy basis on $\R^d$. The integration of deterministic functions with respect to the L\'evy basis can be defined first for real simple functions, then as a limit in probability of such integrals. More details can be found in \cite{rajput1989spectral}.

\subsection{Trawl processes}
Suppose $\Lambda$ is a homogeneous L\'evy basis on $\R^d \times \R$, $d \in \N$, with characteristic quadruple $(a, b, \nu, Leb)$ and let $\kappa=\kappa_L$ denote the cumulant function \eqref{e:kappaL} of the L\'evy seed process $L=\{L(t), \, t \geq 0\}$.

Let $A=A_0 \in \mathcal{B}_b(\R^d\times \R)$ be a Borel set of finite Lebesgue measure and for $t\in \R$ put $A_t = A + (\bm{0}, t)$. The \emph{trawl process} associated with L\'evy basis $\Lambda$ and \emph{trawl} $A$ is defined as
\begin{equation*}
  X(t) = \Lambda (A_t) = \int_{\R^d \times \R} \bm{1}_{A} (\bm{\xi}, s-t) \Lambda (d \bm{\xi} , ds), \quad t \in \R.
\end{equation*}
The process $\{X(t), \, t \in \R\}$ is strictly stationary (\cite{barndorff2014integer}) and
\begin{equation*}
  C \left\{ \zeta \ddagger X(t) \right\} = Leb(A) \kappa_L(\zeta)
\end{equation*}
The cumulants, if they exist, are given by
\begin{equation}\label{e:kappaXtoL}
  \kappa_X^{(m)} = Leb(A) \kappa_L^{(m)}
\end{equation}
where $\kappa_L^{(m)}$ denotes the $m$-th order cumulant of $L(1)$.

While specifying the infinitely divisible law of the L\'evy basis controls the marginal distribution of the trawl process, the choice of the trawl set $A$ determines the dependence structure of the process. For simplicity, we will assume in the following that $d=1$ so that $A \in \mathcal{B}_b(\R \times \R)$ and
\begin{equation}\label{Xdef}
  X(t) = \Lambda (A_t) = \int_{\R \times \R} \bm{1}_{A} (\xi, s-t) \Lambda (d \xi , ds), \quad t \in \R.
\end{equation}
The typical way to specify the trawl $A \in \mathcal{B}_b(\R \times \R)$ is to put
\begin{equation*}
  A = \left\{ (\xi, s) : 0 \leq \xi \leq g(-s), \ s \leq 0 \right\},
\end{equation*}
where $g : [0,\infty) \to [0,\infty)$ is a measurable function such that $Leb(A) < \infty$. Then, clearly
\begin{equation*}
  A_t = \left\{ (\xi, s) : 0 \leq \xi \leq g(t-s), \ s \leq t \right\}
\end{equation*}
and we can write
\begin{equation*}
   X(t) = \int_{\R \times (-\infty,t]} \bm{1}_{[0,g(t-s)]} (\xi) \Lambda (d \xi , ds), \quad t \in \R.
\end{equation*}
We will refer to $g$ as the \textit{trawl function} and in the following we always assume $g$ is nonincreasing and hence $g(-s)$, $s \in (-\infty,0]$ is nondecreasing.

By using \cite[Proposition 5.]{barndorff2015recent}, one can show that for $\zeta_1, \zeta_2 \in \R$ and $h\geq 0$
\begin{equation}\label{e:bivariate}
   \log \E e^{i \left( \zeta_1 X(0) + \zeta_2 X(h) \right) } = \int_{\R \times \R} \kappa_L \left( \zeta_1 \bm{1}_{A} (\xi, s) + \zeta_2 \bm{1}_{A} (\xi, s-h) \right) d \xi ds.
\end{equation}
Now if $E X(t)^2 < \infty$, then taking derivative with respect to $\zeta_1$ and $\zeta_2$ in \eqref{e:bivariate} and letting $\zeta_1, \zeta_2 \to 0$ we obtain
\begin{align*}
  \E X(t) X(t+h) = \int_{\R \times \R}  \bm{1}_{A} (\xi, s) \bm{1}_{A} (\xi, s-h)  d \xi ds = \int_{-\infty}^0 \int_0^{g(h-s)} d\xi ds = \int_{h}^\infty g(x) dx.
\end{align*}
Hence, the correlation function of the trawl process for $h\geq 0$ is
\begin{equation}\label{e:trawlcorr}
  r(h) = \corr \left( X(t), X(t+h) \right) = \frac{ \int_h^\infty g(x) d x}{ \int_0^\infty g(x) dx}.
\end{equation}
This shows how the choice of $g$ affects the dependence.

\begin{example}
Suppose for some $\alpha>0$, $g$ is $-(\alpha+1)$-regularly varying at infinity so that $g(x)=L(x) x^{-\alpha-1}$, with $L$ slowly varying at infinity, i.e.~for every $x>0$, $L(tx)/L(t)\to 1$ as $t\to \infty$. Then from \eqref{e:trawlcorr} by Karamata's theorem \cite[Proposition 1.5.10.]{bingham1989regular} we have that
\begin{equation*}
  r(h) \sim \frac{1}{\alpha \int_0^\infty g(x) dx} L(h) h^{-\alpha}, \quad \text{ as } h \to \infty.
\end{equation*}
In particular, by taking $\alpha \in (0,1)$ we can obtain a trawl process with non-integrable correlation function, a property well known as the \textit{long-range dependence}. The next example is a particular case.
\end{example}

\begin{example}\label{ex:gammatrawl}
Suppose $\{X(t), \, t \in \R\}$ is a trawl process with finite second moment specified by the trawl function
\begin{equation}\label{e:gammatrawlg}
  g(x) = (1+x)^{-\alpha-1},
\end{equation}
for some $\alpha>0$. From \eqref{e:trawlcorr} it follows that the correlation function is
\begin{equation*}
  r(h) = (1+h)^{-\alpha}, \quad h \geq 0.
\end{equation*}
In \cite{barndorff2014integer}, the same example is obtained indirectly as a special case of the so-called superposition trawl. The general superposition trawl is specified by the trawl function
\begin{equation*}
  \widetilde{g}(x) = \int_0^\infty e^{-\lambda x} \pi(d\lambda), \qquad x \geq 0,
\end{equation*}
where $\pi$ is some probability measure on $(0,\infty)$ such that $\int_0^\infty \lambda^{-1} \pi(d\lambda) < \infty$. Taking $\pi$ to be the Gamma distribution $\Gamma(1+\alpha,1)$ distribution, defined by the density
\begin{equation*}
  f(\lambda)= \frac{1}{\Gamma(1+\alpha)} \lambda^{\alpha} e^{-\lambda} \bm{1}_{(0,\infty)}(\lambda),
\end{equation*}
we obtain a trawl specified by \eqref{e:gammatrawlg}. Such a modelling framework is motivated by the similar approach used in superpositions of Ornstein-Uhlenbeck type processes (see \cite{barndorff2001superposition}).
\end{example}

\subsection{Integrated process}
Given a trawl process $\{X(t), \ t \in \R\}$ we will denote by $\{X^*(t), \ t \geq 0\}$ the integrated process
\begin{equation}\label{X*def}
  X^*(t) = \int_0^t X(u) du.
\end{equation}
The following lemma expresses cumulants of the integrated process $\kappa_{X^*}^{(m)} (t)$ in terms of the cumulants $\kappa_L^{(m)}$ of the L\'evy seed. We will assume that the cumulant function $\kappa_L$ of the L\'evy seed process is analytic in a neighborhood of the origin. A sufficient condition for the analyticity of $\kappa_L$ in the neighborhood of the origin is that there exists $a>0$ such that $\E e^{a |L(1)|} <\infty$ \cite[Theorem 7.2.1]{lukacs1970characteristic}. This implies in particular that all the moments and cumulants of $X(t)$ exist. Many infinitely divisible distributions satisfy this condition, for example, inverse Gaussian, normal inverse Gaussian, gamma, variance gamma, tempered stable (see \cite{GLST2016} for details).

\begin{lemma}\label{lemma:cumulants}
Suppose $\{X(t), \ t \in \R\}$ is a trawl process \eqref{Xdef} such that the cumulant function $\kappa_L$ of the L\'evy seed process $\{L(t)\}$ is analytic in a neighborhood of the origin. The cumulants of $X^*(t)$ are then given by
\begin{equation}\label{e:kappaX*}
  \kappa_{X^*}^{(m)} (t) = \kappa_L^{(m)} \int_{\R \times \R} \left( h_A(\xi, s, t) \right)^m d \xi ds, \quad m \geq 1,
\end{equation}
where $\kappa_L^{(m)}$ is the $m$-th order cumulant of the L\'evy seed process $L$ and
\begin{equation}\label{e:hA}
  h_A(\xi, s, t) = \int_0^t  \bm{1}_{A} (\xi, s-u) du = \int_0^t  \bm{1}_{(-\infty, g(u-s)]} (\xi) \bm{1}_{(-\infty,u]}(s) du.
\end{equation}
\end{lemma}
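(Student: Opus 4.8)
The plan is to obtain a stochastic-integral representation of $X^*(t)$ against the L\'evy basis $\Lambda$, read off its cumulant function from the exponential formula for such integrals, and then extract the cumulants by expanding $\kappa_L$ in a power series. Starting from \eqref{Xdef}, I would write
\[
  X^*(t) = \int_0^t X(u)\,du = \int_0^t \left( \int_{\R\times\R} \bm{1}_A(\xi, s-u)\, \Lambda(d\xi, ds)\right) du
\]
and interchange the deterministic and stochastic integrations. To justify this, note that the kernel $h_A(\xi,s,t) = \int_0^t \bm{1}_A(\xi,s-u)\,du$ satisfies $0 \le h_A(\xi,s,t) \le t$ and vanishes outside $\bigcup_{0\le u\le t} A_u$, a set of finite Lebesgue measure since $Leb(A)<\infty$ and $g$ is nonincreasing; combined with the assumed analyticity of $\kappa_L$ (which forces finiteness of the relevant moments of the seed), this gives the integrability conditions of \cite{rajput1989spectral}, and a stochastic Fubini theorem then yields the representation $X^*(t) = \int_{\R\times\R} h_A(\xi,s,t)\,\Lambda(d\xi,ds)$. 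An alternative route that sidesteps an explicit Fubini theorem is to approximate $X^*(t)$ by the Riemann sums $\tfrac tn \sum_{j=1}^n X(jt/n)$, apply the $k$-variate analogue of \eqref{e:bivariate} (which follows from \cite[Proposition 5.]{barndorff2015recent}), observe that $\tfrac tn \sum_{j=1}^n \bm{1}_A(\xi,s-jt/n) \to h_A(\xi,s,t)$, and pass to the limit by dominated convergence.

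Once the representation is in place, the exponential formula for integrals against a homogeneous L\'evy basis (of which \eqref{e:bivariate} is the bivariate instance) gives
\[
  \kappa_{X^*}(\zeta, t) = \int_{\R\times\R} \kappa_L\big( \zeta\, h_A(\xi,s,t) \big)\, d\xi\, ds.
\]
Since $\kappa_L$ is analytic near the origin, \eqref{e:cgf} applies, and because $h_A$ is bounded by $t$, the series $\kappa_L(\zeta h_A) = \sum_{m\ge 1} \frac{(i\zeta h_A)^m}{m!}\kappa_L^{(m)}$ converges, for $|\zeta|$ sufficiently small, uniformly in $(\xi,s)$, and is dominated by a convergent numerical series. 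As the integrand is supported on a set of finite measure, I may exchange the summation and the integral to obtain
\[
  \kappa_{X^*}(\zeta, t) = \sum_{m\ge 1} \frac{(i\zeta)^m}{m!}\, \kappa_L^{(m)} \int_{\R\times\R} \big(h_A(\xi,s,t)\big)^m\, d\xi\, ds,
\]
and comparing this with the cumulant expansion \eqref{e:cgf} for $X^*(t)$ and matching the coefficients of $(i\zeta)^m/m!$ yields \eqref{e:kappaX*}; this argument also shows \emph{en passant} that all cumulants of $X^*(t)$ are finite, as claimed in the discussion preceding the lemma.

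The main obstacle is the first step: establishing $\Lambda$-integrability of $h_A$ and the legitimacy of the interchange of integrals (or, in the Riemann-sum variant, the uniform-integrability and dominated-convergence argument needed to pass to the limit in the cumulant functions). The remaining steps are routine given the analyticity of $\kappa_L$ and the finiteness of the Lebesgue measure of the support of $h_A$.
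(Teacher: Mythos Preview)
Your proposal is correct and follows essentially the same route as the paper: obtain the identity $\kappa_{X^*}(\zeta,t)=\int_{\R\times\R}\kappa_L(\zeta\,h_A(\xi,s,t))\,d\xi\,ds$, expand $\kappa_L$ via \eqref{e:cgf}, and interchange sum and integral. The only difference is that the paper invokes \cite[Proposition~5]{barndorff2015recent} directly for the first identity (and does not spell out the justification for the interchange), whereas you supply those details explicitly via the stochastic Fubini/Riemann-sum argument and the boundedness and finite-support properties of $h_A$.
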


\begin{proof}
From \cite[Proposition 5.]{barndorff2015recent} it follows that
\begin{equation*}
  C \left\{ \zeta \ddagger X^*(t) \right\} = C \left\{ \zeta \ddagger \int_0^t X(u) du \right\} = \int_{\R \times \R} C \left\{ \zeta h_A(\xi, s, t) \ddagger L(1) \right\} d \xi ds.
\end{equation*}
with $h_A(\xi, s, t)$ given by \eqref{e:hA}. By the analyticity of $\kappa_L$ we have
\begin{equation*}
  C \left\{ \zeta \ddagger L(1) \right\} = \sum_{m=1}^\infty \kappa_L^{(m)} \frac{\left(i \zeta\right)^{m}}{m!}
\end{equation*}
and so
\begin{align*}
  C \left\{ \zeta \ddagger X^*(t) \right\} &= \int_{\R \times \R} \sum_{m=1}^\infty \kappa_L^{(m)} \frac{\left(i \zeta\right)^{m}}{m!} \left( h_A(\xi, s, t) \right)^m d \xi ds\\
  &= \sum_{m=1}^\infty \left( \kappa_L^{(m)} \int_{\R \times \R} \left( h_A(\xi, s, t) \right)^m d \xi ds \right)  \frac{\left(i \zeta\right)^{m}}{m!}.
\end{align*}
\end{proof}

\section{Intermittency}\label{sec3}
Intermittency is a property used to describe models exhibiting sharp fluctuations in time and a high degree of variability. The term has a precise definition in the theory of stochastic partial differential equations, where it is characterized by the Lyapunov exponents (see e.g.~\cite{zel1987intermittency,carmona1994parabolic,khoshnevisan2014analysis,chen2015moments}).

Here, we follow \cite{GLST2016} and define intermittency as a property which indicates that the stochastic process does not have a typical limiting behavior of moments. Intermittency is characterized by the scaling function. The \textit{scaling function} of the process $Y=\{Y(t),\, t \geq 0\}$ is defined in the range of finite moments $(0,\overline{q}(Y))$, $\overline{q}(Y) = \sup \{ q >0 :\E|Y(t)|^q < \infty  \ \forall t\}$ as the limit
\begin{equation}\label{deftau}
    \tau_Y(q) = \lim_{t\to \infty} \frac{\log \E |Y(t)|^q}{\log t},
\end{equation}
assuming the limit exists and is finite. It can be shown that $\tau_Y$ is always convex and $q \mapsto \tau_Y(q)/q$ is non-decreasing (\cite{GLST2015}).

\begin{defi}
A stochastic process $Y=\{Y(t),\, t \geq 0\}$ is \textit{intermittent} if there exist some $p, r \in (0,\overline{q}(Y))$ such that
\begin{equation}\label{intermittency}
    \frac{\tau_Y(p)}{p} < \frac{\tau_Y(r)}{r},
\end{equation}
that is, $\tau_Y(q)/q$ is strictly increasing at some $q$.
\end{defi}

Recall that the process $Y$ is $H$-self-similar if for any $c>0$, $\{Y(ct)\}\overset{d}{=} \{c^H Y(t)\}$, where $\{\cdot\} \overset{d}{=} \{\cdot\}$ denotes the equality of finite dimensional distributions. If $Y$ is a $H$-self-similar process, then $\tau_Y(q)=Hq$, and $\tau_Y(q)/q$ is constant, therefore the process is not intermittent. Recall that by Lamperti's theorem (see, for example, \cite[Theorem 2.1.1]{embrechts2002selfsimilar}), if as $n \to \infty$
\begin{equation}
\left\{ \frac{Y(nt)}{A_n} \right\} \overset{d}{\to} \left\{ Z(t) \right\}, \label{limitform}
\end{equation}
where $\{\cdot\} \overset{d}{\to} \{\cdot\}$ means convergence of all finite-dimensional distributions, $Z(t)$ is always a self-similar process and the normalizing sequence must be of the form $A_n=L(n) n^H$ for some $H>0$ and $L$ slowly varying at infinity. From here, one can show that as soon as \eqref{limitform} holds, then there is $H>0$ such that for every $q>0$ satisfying
\begin{equation}\label{limitformmom}
    \frac{\E| Y(nt)|^q}{A_n^q} \to \E |Z(t)|^q, \quad \forall t \geq 0,
\end{equation}
one has that $\tau_Y(q)= H q$. In this setting, $Y$ usually represents some form of cumulative process, e.g.~partial sum process or integrated process. Hence, when intermittency is present, \eqref{limitform} and \eqref{limitformmom} cannot both hold (see \cite{GLST2016} for details).

The following theorem establishes intermittency of certain integrated trawl process. For the L\'evy seed, any infinitely divisible distribution is allowed provided it has cumulant function analytic in the neighbourhood of the origin. However, the Gaussian case is excluded. In the Gaussian case one can apply \cite[Lemma 5.1]{Taqqu1975} and obtain limit theorems with convergence to fractional Brownian motion (see \cite[Example 9]{GLST2016} for the similar argument). The underlying trawl process is assumed to a trawl function regularly varying at infinity. Additionally, the trawl function is assumed to be continuously differentiable and decreasing. An example of such trawl is given in Example \ref{ex:gammatrawl}.

\begin{theorem}\label{thm:interm}
Let $\{X(t), \, t \in \R\}$ be a zero mean non-Gaussian trawl process such that the cumulant function $\kappa_L$ of the L\'evy seed process is analytic in the neighborhood of the origin and suppose the trawl function $g$ is continuously differentiable, decreasing  and $(-\alpha-1)$-regularly varying at infinity for some $\alpha >0$. If $\tau_{X^*}$ is the scaling function \eqref{deftau} of the process $X^*=\{X^*(t), \, t \geq 0\}$ in \eqref{X*def}, then for every $q\geq q^*$
\begin{equation*}
\tau_{X^*}(q) = q-\alpha,
\end{equation*}
where $q^*$ is the smallest even integer greater than $2\alpha$. In particular, for $q^*\leq p < r$
\begin{equation*}
\frac{\tau_{X^*}(p)}{p} < \frac{\tau_{X^*}(r)}{r}
\end{equation*}
and hence $X^*$ is intermittent.
\end{theorem}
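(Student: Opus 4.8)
The plan is to route everything through Lemma~\ref{lemma:cumulants}. Since $X^*(t)=\int h_A\,d\Lambda$ is infinitely divisible with cumulant function analytic near the origin (as $0\le h_A\le t$), all of its absolute moments are finite and are polynomials in the cumulants $\kappa_{X^*}^{(m)}(t)=\kappa_L^{(m)}I_m(t)$, where $I_m(t):=\int_{\R\times\R}\big(h_A(\xi,s,t)\big)^m\,d\xi\,ds$. So the first step is a closed form for $I_m$. Writing $h_A(\xi,s,t)=\int_0^t\bm{1}_A(\xi,s-u)\,du$ and using Tonelli, $I_m(t)=\int_{[0,t]^m}Leb\big(\bigcap_{j=1}^m A_{u_j}\big)\,du_1\cdots du_m$; since $g$ is nonincreasing, $\bigcap_j A_{u_j}=\{(\xi,s):0\le\xi\le g(\max_j u_j-s),\ s\le\min_j u_j\}$, whose Lebesgue measure is $\overbar{G}(\max_j u_j-\min_j u_j)$ with $\overbar{G}(y):=\int_y^\infty g(x)\,dx$. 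Using the symmetry of the integrand and integrating out the interior variables gives, for $m\ge2$,
\begin{equation*}
  I_m(t) = m(m-1)\int_0^t (t-v)\, v^{m-2}\, \overbar{G}(v)\, dv ,
\end{equation*}
while $I_1(t)=\overbar{G}(0)\,t$ is irrelevant since $\kappa_L^{(1)}=0$ ($X$ is centred). By Karamata's theorem $\overbar{G}$ is $(-\alpha)$-regularly varying with $\overbar{G}(y)\sim\alpha^{-1}L(y)y^{-\alpha}$ and $\overbar{G}(0)=Leb(A)<\infty$; splitting the integral above as $t\int_0^t v^{m-2}\overbar{G}(v)\,dv-\int_0^t v^{m-1}\overbar{G}(v)\,dv$ and applying Karamata to each piece (a $\rho$-regularly varying integrand integrates to $\sim t^{\rho+1}/(\rho+1)$ times slowly varying when $\rho>-1$, and to a finite limit when $\rho<-1$) yields
\begin{equation*}
  \lim_{t\to\infty}\frac{\log I_m(t)}{\log t}=\max(1,m-\alpha),\qquad m\ge2,
\end{equation*}
and, when $m>\alpha+1$, the sharper equivalent $I_m(t)\sim\frac{m(m-1)}{\alpha(m-1-\alpha)(m-\alpha)}\,t^{m-\alpha}L(t)$ with a positive constant (the two leading terms do not cancel). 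Note $q\ge q^*>2\alpha$ forces $q>\alpha+1$, so the block of full size $q$ always falls in this sharp regime.

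Next, fix an even integer $q\ge q^*$. The moment--cumulant formula gives $\E\big(X^*(t)\big)^q=\sum_\pi\prod_{B\in\pi}\kappa_L^{(|B|)}I_{|B|}(t)$, summed over partitions $\pi$ of $\{1,\dots,q\}$ whose blocks all have size $\ge2$ (singletons drop out because $X$ is centred). The term indexed by $\pi$ grows with exponent $\sum_{B\in\pi}\max(1,|B|-\alpha)$; splitting the blocks into ``small'' ($|B|\le\alpha+1$, contributing $1$) and ``large'' ($|B|>\alpha+1$, contributing $|B|-\alpha$) and using that small blocks have size $\ge2$, one checks that for every non-trivial $\pi$ this exponent is at most $\max\!\big(\tfrac q2,\,q-1-\alpha,\,q-2\alpha\big)$, which is strictly less than $q-\alpha$ precisely because $q>2\alpha$. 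Hence the one-block partition strictly dominates and all other terms are $o\big(t^{q-\alpha}L(t)\big)$. Since $q$ is even and $\Lambda$ is non-Gaussian, $\kappa_L^{(q)}=\int_\R x^q\,\nu(dx)>0$ (and $\kappa_L^{(2)}=b+\int x^2\,\nu(dx)>0$ if $q=2$), so $\E\big(X^*(t)\big)^q\sim\kappa_L^{(q)}I_q(t)\sim c_q\,t^{q-\alpha}L(t)$ with $c_q>0$, i.e.\ $\tau_{X^*}(q)=q-\alpha$.

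Finally, for an arbitrary $q\ge q^*$, the upper bound $\tau_{X^*}(q)\le q-\alpha$ follows from convexity in $q$ of $\limsup_{t\to\infty}\frac{\log\E|X^*(t)|^q}{\log t}$ (a limsup of functions of $q$ that are convex by Lyapunov's inequality), together with the equality $=q-\alpha$ just established at all even integers $\ge q^*$, which lie on the line $y=x-\alpha$. For the matching lower bound at a $q\ge q^*$ that is not an even integer, pick even integers $2k,2k+2\ge q^*$ with $q\in(2k,2k+2)$ and truncate: $\E|X^*(t)|^q\ge(\delta t)^{q-(2k+2)}\,\E\big[|X^*(t)|^{2k+2}\bm{1}_{\{|X^*(t)|\le\delta t\}}\big]$, while $\E\big[|X^*(t)|^{2k+2}\bm{1}_{\{|X^*(t)|\le\delta t\}}\big]\ge\E|X^*(t)|^{2k+2}-(\delta t)^{-2}\E|X^*(t)|^{2k+4}$, and by the even-order asymptotics the right-hand side is $\ge\tfrac12 c_{2k+2}\,t^{2k+2-\alpha}$ once $\delta$ is fixed large and $t$ is large, whence $\E|X^*(t)|^q\ge C\,t^{q-\alpha}$. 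Thus $\tau_{X^*}(q)=q-\alpha$ for every $q\ge q^*$; since then $\tau_{X^*}(q)/q=1-\alpha/q$ is strictly increasing, the displayed strict inequality holds for $q^*\le p<r$ and $X^*$ is intermittent. I expect the main friction to be the bookkeeping with the slowly varying factors — verifying that the subdominant partitions in the moment expansion really are $o(t^{q-\alpha}L(t))$, and that no boundary exponent $m=\alpha+1$ interferes (it cannot, since $q>2\alpha$ gives $q>\alpha+1$ strictly) — together with the calibration of the truncation level $\delta$ in the non-integer case.
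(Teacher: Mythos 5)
Your proposal is correct, and while its skeleton matches the paper's (cumulants of $X^*$ via Lemma~\ref{lemma:cumulants}, Karamata asymptotics for $\int (h_A)^m$, passage from even-order cumulants to even moments, then interpolation to all real $q\geq q^*$), the execution differs in two worthwhile ways. First, for the key integral $I_m(t)=\int_{\R\times\R}(h_A)^m\,d\xi\,ds$ the paper inverts the trawl function and splits the plane into four regions, computing $I_1^{(m)},\dots,I_4^{(m)}$ separately (this is where the hypotheses that $g$ be continuously differentiable and strictly decreasing are used); you instead apply Tonelli and the identity $Leb\bigl(\bigcap_j A_{u_j}\bigr)=\overbar{G}(\max_j u_j-\min_j u_j)$, $\overbar{G}(y)=\int_y^\infty g(x)\,dx$, to get the single formula $I_m(t)=m(m-1)\int_0^t(t-v)v^{m-2}\overbar{G}(v)\,dv$, which needs only that $g$ be nonincreasing and whose leading constant $\frac{m(m-1)}{\alpha(m-1-\alpha)(m-\alpha)}$ agrees exactly with the sum of the paper's four constants $\frac1\alpha+\frac{2}{m-\alpha}+\frac{\alpha+1}{(m-\alpha-1)(m-\alpha)}$ — so your route is both cleaner and marginally more general. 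Second, the paper outsources the cumulant-to-moment step and the extension from even integers to all $q\ge q^*$ to the proofs of Theorem 7 and Lemma 3 of \cite{GLST2016}; you make both explicit: the partition bound $\sum_{B\in\pi}\max(1,|B|-\alpha)\le\max(q/2,\,q-1-\alpha,\,q-2\alpha)<q-\alpha$ for nontrivial $\pi$ is a correct and complete justification of one-block dominance (with $\kappa_L^{(q)}>0$ for even $q$ since $\nu\neq0$), and your interpolation uses convexity of the limsup scaling function for the upper bound plus a truncation argument for the liminf lower bound, which neatly sidesteps the fact that a liminf of convex functions need not be convex. Only cosmetic points to tidy: the inference ``$q>2\alpha$ forces $q>\alpha+1$'' needs the extra remark that $q^*\ge2$ when $\alpha<1$ (it is true, as you note, but not a one-line consequence of $q>2\alpha$ alone); the truncation lower bound should carry the slowly varying factor $L(t)$ (harmless on the $\log t$ scale); and a block of size exactly $\alpha+1$ contributes $t^{1+o(1)}$ rather than exactly exponent $1$, which the strict gap in your partition bound absorbs after choosing $\varepsilon$ small.
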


\begin{proof}
First, we will investigate the asymptotic behavior of $\kappa_{X^*}^{(m)}(t)$ for $m \in \N$ as $t\to \infty$ using \eqref{e:kappaX*}. By the assumptions, the trawl function $g:[0,\infty)\to (0,g(0)]$ is invertible and we can rewrite \eqref{e:hA} in the following form
\begin{align*}
  h_A(\xi,s,t) &= \int_0^t   \bm{1}_{[0,g(u-s)]} (\xi) \bm{1}_{(-\infty,u]}(s) du\\
  &= \int_0^t   \bm{1}_{(-\infty, g^{-1}(\xi) + s]} (u) \bm{1}_{[s,\infty)}(u) du.
\end{align*}
From here we conclude that $h_A(\xi,s,t)=0$ if either $s>t$ or $\xi<0$ or $\xi>g(0)$ or $g^{-1}(\xi)<-s$ (which is equivalent to $\xi>g(-s)$ for $s\leq0$). Otherwise, we have for $s \leq 0$
\begin{equation*}
  h_A(\xi,s,t) = \int_0^t \bm{1}_{[0, g^{-1}(\xi) + s]} (u) du = \left( g^{-1}(\xi) + s \right) \wedge t
\end{equation*}
and for $s>0$
\begin{equation*}
  h_A(\xi,s,t) = \int_0^t \bm{1}_{[s, g^{-1}(\xi) + s]} (u) = \left( \left( g^{-1}(\xi) + s \right) \wedge t \right) - s.
\end{equation*}
Hence, we can write
\begin{equation}\label{e:hA:fullform}
  h_A(\xi, s, t) = \begin{cases}
  t, & \text{ if } s \leq 0 \text{ and } 0 \leq \xi \leq g(t-s),\\
  g^{-1}(\xi) + s, & \text{ if } s \leq 0 \text{ and } g(t-s) < \xi \leq g(-s),\\
  t-s, & \text{ if } 0 < s \leq t \text{ and } 0 \leq \xi \leq g(t-s),\\
  g^{-1}(\xi), & \text{ if } 0 < s \leq t \text{ and } g(t-s) < \xi \leq g(0),\\
  0, & \text{ otherwise}.
  \end{cases}
\end{equation}
The cumulants of the integrated process \eqref{e:kappaX*} can now be expressed as
\begin{equation}\label{e:proof:kappaX*}
  \kappa_{X^*}^{(m)} (t) = \kappa_L^{(m)} \left( I_1^{(m)}(t) + I_2^{(m)}(t) + I_3^{(m)}(t) + I_4^{(m)}(t) \right),
\end{equation}
where
\begin{align*}
  I_1^{(m)}(t) &= \int_{-\infty}^0 \int_0^{g(t-s)} t^m d \xi ds,\\
  I_2^{(m)}(t) &= \int_{-\infty}^0 \int_{g(t-s)}^{g(-s)} (g^{-1}(\xi) + s)^m d \xi ds,\\
  I_3^{(m)}(t) &= \int_0^t \int_0^{g(t-s)} (t-s)^m d \xi ds,\\
  I_4^{(m)}(t) &= \int_0^t \int_{g(t-s)}^{g(0)} (g^{-1}(\xi))^m d \xi ds.
\end{align*}
We assumed $X$ has zero mean, hence $\kappa_L^{(1)}=0$ from \eqref{e:kappaXtoL} and consequently $\kappa_{X^*}^{(1)}=0$.

\smallskip
\textbf{Case $m> \alpha +1$}. We now take $m> \alpha +1$ and consider each integral one by one. Since $g$ is $(-\alpha-1)$-regularly varying, it can be written in the form $g(x)=L(x) x^{-\alpha-1}$ with $L$ slowly varying at infinity. For $I_1^{(m)}(t)$ using a change of variable and Karamata's theorem \cite[Proposition 1.5.10.]{bingham1989regular} we get
\begin{equation}\label{proof:I1m}
  I_1^{(m)}(t) = t^m \int_{-\infty}^0 g(t-s) ds = t^m \int_t^\infty g(u) du \sim \frac{1}{\alpha} L(t) t^{m-\alpha}, \quad \text{ as } t \to \infty.
\end{equation}
For the second integral, since $g$ is assumed to be continuously differentiable, we have by the change of variable $u=g^{-1}(\xi)+s$ and Fubini's theorem
\begin{equation*}
  I_2^{(m)}(t) =\int_{-\infty}^0 \int_{t}^{0} u^m g'(u-s) du ds = \int_{t}^{0} u^m  \int_{-\infty}^0  g'(u-s) ds du = \int_{0}^{t} u^m g(u) du.
\end{equation*}
Now from \cite[Proposition 1.5.11. (i)]{bingham1989regular} it follows that
\begin{equation*}
  I_2^{(m)}(t) \sim \frac{1}{m-\alpha} L(t) t^{m-\alpha}, \quad \text{ as } t \to \infty.
\end{equation*}
Similarly, for $I_3^{(m)}(t)$ we obtain
\begin{equation*}
  I_3^{(m)}(t) = \int_0^t (t-s)^m g(t-s) ds = \int_0^t u^m g(u) du  \sim \frac{1}{m-\alpha} L(t) t^{m-\alpha}, \quad \text{ as } t \to \infty.
\end{equation*}
Finally, for $I_4^{(m)}(t)$ by the change of variable $u=g^{-1}(\xi)$, Fubini's theorem and integration by parts it follows that
\begin{align}
  I_4^{(m)}(t) &= \int_0^t \int_{t-s}^{0} u^m g'(u) du ds\nonumber\\
  &= - \int_0^t u^m g'(u) \int_0^{t-u} ds du\nonumber\\
  &= \int_0^t u^{m+1} g'(u) du - t \int_0^t u^m g'(u) du\nonumber\\
  &= t^{m+1} g(t) - (m+1) \int_0^t u^{m} g(u) du - t^{m+1} g(t) + t m \int_0^t u^{m-1} g(u) du\nonumber\\
  &= t m \int_0^t u^{m-1} g(u) du - (m+1) \int_0^t u^{m} g(u) du.\label{e:I4m}
\end{align}
Since we have assumed $m>\alpha+1$ and \cite[Proposition 1.5.11. (i)]{bingham1989regular} can be applied to get as $t \to \infty$
\begin{equation*}
  I_4^{(m)}(t) \sim  \frac{m}{m-\alpha-1} L(t) t^{m-\alpha} - \frac{m+1}{m-\alpha} L(t) t^{m-\alpha} = \frac{\alpha+1}{(m-\alpha-1)(m-\alpha)} L(t) t^{m-\alpha}.
\end{equation*}

We now conclude from \eqref{e:proof:kappaX*} that for every $m>\alpha+1$ such that $\kappa_L^{(m)}\neq 0$ there exists a slowly varying function $L_m$ such that $\kappa_{X^*}^{(m)} (t) \sim L_m(t) t^{m-\alpha}$.

\textbf{Case $m< \alpha +1$}. In this case we will only need an upper bound on $\kappa_{X^*}^{(m)}(t)$. The equation \eqref{proof:I1m} remains valid anyway and shows that $I_1^{(m)}(t)\leq C_1 t$ for $t$ large enough. Next, since $u^m g(u)=u^{m-\alpha-1}L(u)$ is bounded at infinity, we have
\begin{equation*}
\left| I_2^{(m)}(t) \right| = \left| I_3^{(m)}(t) \right| = \int_0^t u^m g(u) du \leq C_2 t.
\end{equation*}
Similarly, we can take $0<\varepsilon<\alpha+1-m$ and $u$ large enough so that $u^{m-1} g(u)=u^{m-\alpha-2}L(u)\leq C_3 u^{-1-\varepsilon}$. Hence, we have from \eqref{e:I4m}
\begin{equation*}
\left| I_4^{(m)}(t) \right| \leq t m \int_0^t u^{-1-\varepsilon} du + (m+1) C_2 t \leq C_4 t.
\end{equation*}
We conclude from \eqref{e:proof:kappaX*} that $\left| \kappa_{X^*}^{(m)}(t) \right| \leq C t$ for $m<\alpha+1$.

\smallskip
\textbf{Case $m= \alpha +1$}. Note that this is possible only if $\alpha$ is an integer. If the slowly varying function $L$ is bounded, everything remains the same as in proof of the previous case. Otherwise, for arbitrary $\varepsilon>0$, we can take $u$ large enough so that $L(u)\leq u^\varepsilon$. Now one can proceed as in the previous case to obtain that $\left| \kappa_{X^*}^{(m)}(t) \right| \leq C t^{1+\varepsilon}$ for $m=\alpha+1$.

Having established these results now, we can relate cumulants to moments as in the proof of \cite[Theorem 7]{GLST2016} and show that for some slowly varying function $\widetilde{L}$
\begin{equation*}\label{proof:conclusion1}
E|X^*(t)|^m \sim \widetilde{L}(t) t^{m-\alpha}
\end{equation*}
and consequently $\tau_{X^*}(m) = m-\alpha$, for any even integer $m$ greater than $2 \alpha$. As in \cite[Lemma 3]{GLST2016}, the convexity of $\tau_{X^*}$ is then used to extend the validity of $\tau_{X^*}(q) = q-\alpha$ to any real $q\geq q^*$.
\end{proof}

\bigskip

\textbf{Acknowledgments:} Nikolai N. Leonenko was supported in part by projects MTM2012-32674 (co-funded by European Regional Development Funds), and MTM2015--71839--P, MINECO, Spain. This research was also supported under Australian Research Council's Discovery Projects funding scheme (project number DP160101366), and under Cardiff Incoming Visiting Fellowship Scheme and International Collaboration Seedcorn Fund.

Murad S.~Taqqu was supported by the NSF grant DMS-1309009 at Boston University.

\bibliographystyle{agsm}
\bibliography{References}

\end{document}